\newcounter{commentcounter}
\renewcommand*{\backref}[1]{}
\renewcommand*{\backrefalt}[4]
{
    \ifcase #1
        No citation in the text.
    \or
        Cited on Page #2.
    \else
        Cited on Pages #2.
    \fi
}
\newtheorem{thm}{Theorem}[section]
\newtheorem{lemma}[thm]{Lemma}
\newtheorem{prop}[thm]{Proposition}
\newtheorem*{lemma*}{Lemma}
\theoremstyle{definition}
\newtheorem{defn}[thm]{Definition}
\theoremstyle{plain}
    \newtheoremstyle{TheoremNum}
        {8pt}{8pt} 
        {\itshape} 
        {-0.15cm} 
        {\bfseries} 
        {.} 
        { }  
        {\thmname{#1}\thmnote{ \bfseries #3}}
    \theoremstyle{TheoremNum}
    \newtheorem{duplicate}{}
\newcommand*{\claimproofname}{My proof}
\tikzstyle{blackNode}=[fill=black, draw=black, shape=circle]
\title{On the derived length of Dyer groups}
\author{Olga Varghese}
\address{Olga Varghese\\ Institute of Mathematics, University of Münster, Einsteinstra{\upshape{\ss}}e 62, 48149, Münster, Germany}
\email{olga.varghese@uni-muenster.de}
\begin{document}

\begin{abstract}
      By definition, a group $G$ is quasi-perfect, if $G$ is perfect or the commutator subgroup of $G$ is perfect. In this note we give a description of quasi-perfect Dyer groups by properties
      of the corresponding Dyer graphs. 
\end{abstract}

\maketitle

\section{Introduction}
 The main objects in this article are groups that are defined by graphs. We start by recalling classical constructions of groups from graphs. Given a finite simplicial graph $\Gamma$ with vertex set $V(\Gamma)$, edge set $E(\Gamma)$ and with an edge-labelling $m\colon E(\Gamma)\rightarrow\mathbb{N}_{\geq 2}$,  the \emph{Coxeter group} $W_\Gamma$ associated to $\Gamma$ is the group with the presentation 
\begin{gather*}
W_\Gamma:=\left\langle V(\Gamma)\middle\vert 
\begin{array}{l} 
v^2=1\text{ for all }v\in V(\Gamma)\text{ and }\\ (vw)^{m(\left\{v,w\right\})}=1  \text{ if } \left\{v,w\right\}\in E(\Gamma)
\end{array}\right\rangle.
\end{gather*}
Another class of groups which is defined in a combinatorial way is the class of \emph{numbered graph products/graph products of cyclic groups}. Given non-trivial cyclic groups $G_1,\ldots, G_n$, the free product construction $G_1*G_2*\ldots*G_n$ is one tool to obtain a new group out of the given groups. Numbered graph products generalize this concept by constructing new groups out of vertex-labelled finite simplicial graphs where the vertices are labelled by $|G_1|,\ldots,|G_n|\in \mathbb{N}_{\geq 2}\cup \{\infty\}$. Given a finite simplicial graph $\Gamma$ and a map $f\colon V(\Gamma)\to\mathbb{N}_{\geq 2}\cup\left\{\infty\right\}$, the \emph{numbered graph product/graph product of cyclic groups} $G_\Gamma$ is defined as follows
\begin{gather*}
G_\Gamma:=\left\langle V(\Gamma)\middle\vert
\begin{array}{l}
v^{f(v)}=1\text{ if }f(v)<\infty\text{ and }\\ vwv^{-1}w^{-1}=1\text{ whenever }\left\{v, w\right\}\in E(\Gamma)
\end{array}\right\rangle.
\end{gather*}
These groups were introduced by Baudisch in \cite{Baudisch1981} for vertices with label $\infty$ and later by Green in \cite{Green1990} for vertices with arbitrary label.

Coxeter groups and numbered graph products are well-studied objects in geometric group theory. These two families of groups are usually studied separately in the literature. We study these groups in a unified way using the Dyer group framework.
 
Let $(\Gamma, m, f)$ be a triple consisting of a finite simplicial graph $\Gamma$, an edge-labelling $m\colon E(\Gamma)\to\mathbb{N}_{\geq 2}$, and a vertex-labelling $f\colon V(\Gamma)\to\mathbb{N}_{\geq 2}\cup\left\{\infty\right\}$. We call this triple a \emph{Dyer graph} if, for every edge $e=\left\{v,w\right\}\in E(\Gamma)$ with $m(e)\neq 2$, we have $f(v)=f(w)=2$. For two letters $a, b$ and $n\in\mathbb{N}$ we define $\pi(a,b,n):= abababa\ldots$ where the word has $n$ letters. The associated \emph{Dyer group} is defined as follows
\begin{gather*}
D_\Gamma:=\left\langle V(\Gamma)\middle\vert\begin{array}{l}
 v^{f(v)}=1 \text{ for } v\in V(\Gamma)\text{ if }f(v)\neq\infty,\\ \pi(v,w,m(\left\{v,w\right\}))=\pi(w,v,m(\left\{v,w\right\}))
\text{ if }\left\{v,w\right\}\in E(\Gamma)\end{array}\right\rangle\,.
\end{gather*}
We note that, if $f(v)=2$ for all $v\in V(\Gamma)$, then $D_\Gamma$ is a Coxeter group, and if $m(e)=2$ for all $e\in E(\Gamma)$, then $D_\Gamma$ is a numbered graph product. 

A recurring theme in the study of Dyer groups is that it is often possible to characterize properties of the group via the combinatorial structure of the graph. In this note our focus is on quasi-perfectness of Dyer groups. 

Let $G$ be a group and 
$[G,G]$ be the commutator subgroup of $G$. We define $G^{(0)}:=G$ and $G^{(i+1)}:=[G^{(i)}, G^{(i)}]$ for $i\geq 0$. 
The \emph{derived length} of $G$ is defined as ${\rm dl}(G)=\infty$ if $G^{(i)}\neq G^{(i+1)}$ for all $i\geq 0$; otherwise 
${\rm dl}(G):={\rm min}\left\{i\mid G^{(i)}=G^{(i+1)}\right\}$. 
By definition, a group $G$ is \emph{perfect} if ${\rm dl}(G)=0$ and $G$ is called \emph{quasi-perfect} if ${\rm dl}(G)\in\left\{0,1\right\}$. 
We note that given groups $G$ and $H$ we have ${\rm dl}(G\times H)={\rm max}\left\{{\rm dl}(G), {\rm dl}(H)\right\}$. 

Our first result gives a characterization of the derived length  of numbered graph products.

\medskip
\begin{duplicate}[Theorem \ref{QuasiPerfectGraphProducts}]
Let $G_\Gamma$ be a graph product of non-trivial cyclic groups. Then ${\rm dl}(G_\Gamma)\in\left\{1,2,\infty\right\}$. In particular, if $\Gamma$ is not a join, then
\begin{enumerate}
\item ${\rm dl}(G_\Gamma)=1$ if and only if $V(\Gamma)=\left\{v_1\right\}$.
\item ${\rm dl}(G_\Gamma)=2$ if and only if $V(\Gamma)=\left\{v,w\right\}$, $E(\Gamma)=\emptyset$ and $f(v)=f(w)=2$.
\end{enumerate}
\end{duplicate}
\medskip

A characterization of quasi-perfect Coxeter groups in terms of graphs was proven in \cite[Theorem 5.2]{BrooksbankPiggott2012}. We show that quasi-perfectness of Dyer groups can also be characterized using Dyer graphs.  
Let $(\Gamma, m, f)$ be a Dyer graph and $D_\Gamma$ be the associated Dyer group.
For every subset $T\subseteq V(\Gamma)$, the subgroup $D_T$ generated by $T$ is isomorphic to the Dyer group $D_\Delta$ where $\Delta$ is the subgraph of $\Gamma$ induced by the vertex set $T$, see \cite{Dyer1990}. A Dyer group $D_\Gamma$ is called \emph{even} if $E(\Gamma)=\emptyset$ or $m(E(\Gamma))\subseteq 2\mathbb{N}$. Using canonical retractions onto subgroups of type $D_T$ we show that the only quasi-perfect even Dyer groups are the abelian ones.

\medskip
\begin{duplicate}[Proposition \ref{evenDyerGroups}]
Let $D_\Gamma$ be an even Dyer group. Then $D_\Gamma$ is quasi-perfect if and only if $\Gamma$ is complete and $E(\Gamma)=\emptyset$ or $m(E(\Gamma))=\left\{2\right\}$.
\end{duplicate}
\medskip

Let $(\Gamma, m, f)$ be a Dyer graph. For a prime number $p$ let $\Gamma^p$ be the graph obtained from $\Gamma$ by removing all edges whose labels are divisible by $p$. 

\medskip
\begin{duplicate}[Theorem \ref{QuasiPerfectDyerGroups}]
Let $(\Gamma, m, f)$ be a Dyer graph and $D_\Gamma$ be the associated Dyer group. Let $V_1,\ldots, V_k$ be the vertex sets of the connected components of $\Gamma^2$ and $\Delta_1,\ldots,\Delta_k\subseteq\Gamma$ be the induced subgraphs. 

Then $D_\Gamma$ is quasi-perfect if and only if the following hold:
\begin{enumerate}
\item For $i\in\left\{1,\ldots, k\right\}$ and for every prime $p$ the graph $\Delta^p_i$ is connected.
\item For each pair $(i,j)$, $1\leq i<j\leq k$ there exist vertices $v\in V_i$ and $w\in V_j$ such that $m(\left\{v,w\right\})=2$. 
\end{enumerate}
\end{duplicate}
\medskip
Our proof relies on the description of quasi-perfect Coxeter groups \cite[Theorem 5.2]{BrooksbankPiggott2012} and on several types of quotients operations on Dyer groups. In particular, our strategy is to associate to a Dyer group $D_\Gamma$ an even Dyer group $D_\Omega$ where we can control ${\rm dl}(D_\Gamma)$ using ${\rm dl}(D_\Omega)$. 

\subsection*{Acknowledgements}
The author would like to thank the Isaac Newton Institute for Mathematical Sciences, Cambridge, for support and hospitali-\\ty during the programme Discrete and profinite groups, where work on this paper was undertaken. This work was supported by EPSRC grant EP/Z000580/1. The author is also funded by the Deutsche Forschungsgemeinschaft (DFG, German Research Foundation) under Germany's Excellence Strategy EXC 2044/2 –390685587, Mathematics Münster: Dynamics-Geometry-Structure.

\section{Simplicial graphs}

A (simplicial) \emph{graph} $\Gamma$ is a tuple $(V, E)$ where
$V$ is a non-empty set and $E$ is a subset of $\left\{\left\{v,w\right\}\mid v,w\in V, v\neq w\right\}$. A graph $\Gamma$ is called \emph{discrete} if $E(\Gamma)=\emptyset$. If $V'\subseteq V$ and $E'\subseteq E$ and $E'$ is a set of 2-element subsets of $V'$, then $\Gamma'=(V', E')$ is called a \emph{subgraph} of $\Gamma$. If $\Gamma'$ is a subgraph of $\Gamma$ and $E'$ contains all the edges $\left\{v, w\right\}\in E$ with $v, w\in V'$, then $\Gamma'$ is called an \emph{induced subgraph} of $\Gamma$. 

\begin{defn}
Let $\Gamma=(V,E)$ be a graph. The graph $\Gamma$ is a \emph{join} of two graphs $\Omega_1=(V_1, E_1)$ and $\Omega_2=(V_2, E_2)$ if $\Omega_1$  and $\Omega_2$ are induced subgraphs of $\Gamma$, $V$ is the disjoint union of $V_1$ and $V_2$ and for every pair $(v,w)\in V_1\times V_2$ we have $\left\{v,w\right\}\in E$. If $\Gamma$ is not a join, then we call $\Gamma$ \emph{indecomposable}.
\end{defn}

\begin{prop}
\label{SubgraphsIndecomposableGraph}
Let $\Gamma=(V,E)$ be an indecomposable graph. If $|V|\geq 3$, then $\Gamma$ has an induced subgraph isomorphic to $\Gamma_1$ or $\Gamma_2$ as shown in \Cref{fig:3verticies}. 

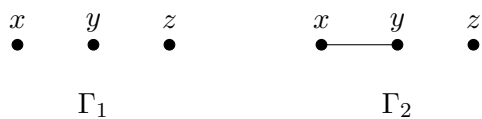
\begin{figure}[htb]
	\begin{center}
	\captionsetup{justification=centering}
		\begin{tikzpicture}
			\draw[fill=black]  (0,0) circle (2pt);
            \node at (0, 0.3) {$x$};
            \draw[fill=black]  (1,0) circle (2pt);
            \node at (1, 0.3) {$y$};
            \draw[fill=black]  (2,0) circle (2pt);
            \node at (2, 0.3) {$z$};
            \node at (1, -0.8) {$\Gamma_1$};

            \draw[fill=black]  (4,0) circle (2pt);
            \node at (4, 0.3) {$x$};
            \draw[fill=black]  (5,0) circle (2pt);
            \node at (5, 0.3) {$y$};
            \draw[fill=black]  (6,0) circle (2pt);
            \node at (6, 0.3) {$z$};
            \draw (4,0)--(5,0);
            \node at (5, -0.8) {$\Gamma_2$};           
\end{tikzpicture}
        \caption{Indecomposable graphs with $3$ vertices.}
	      \label{fig:3verticies}
    \end{center}
\end{figure}
\end{prop}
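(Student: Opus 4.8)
The plan is to pass to the \emph{complement} graph $\overline{\Gamma}=(V,\overline{E})$, where $\{v,w\}\in\overline{E}$ exactly when $v\neq w$ and $\{v,w\}\notin E$. Two features make this convenient. First, taking complements commutes with passing to induced subgraphs, and the three-vertex graphs of \Cref{fig:3verticies} complement to familiar graphs: $\overline{\Gamma_1}$ is a triangle (the complete graph on $\{x,y,z\}$) and $\overline{\Gamma_2}$ is a path on three vertices. Hence $\Gamma$ contains an induced copy of $\Gamma_1$ or $\Gamma_2$ \iff $\overline{\Gamma}$ contains an induced triangle or an induced path on three vertices. Second, I claim $\Gamma$ is indecomposable if and only if $\overline{\Gamma}$ is connected. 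Indeed, if $\overline{\Gamma}$ is disconnected, group its connected components into two nonempty sets $V_1,V_2$ with $V=V_1\sqcup V_2$; then no edge of $\overline{\Gamma}$ joins $V_1$ to $V_2$, which means every pair $(v,w)\in V_1\times V_2$ satisfies $\{v,w\}\in E$, so $\Gamma$ is the join of the induced subgraphs on $V_1$ and $V_2$. Conversely, a join of $\Gamma$ yields a separation of $\overline{\Gamma}$ by the same computation. For our purposes only the direction ``indecomposable $\Rightarrow\overline{\Gamma}$ connected'' is needed.

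With these two reductions the proposition becomes the following purely graph-theoretic statement: \emph{every connected graph $H$ on at least three vertices contains an induced triangle or an induced path on three vertices}. To prove it I would choose a spanning tree $T$ of $H$. Since $H$, and hence $T$, has at least three vertices, the degree sum $2(|V|-1)$ of $T$ exceeds $|V|$, so some vertex $v$ has degree at least $2$ in $T$. Pick two distinct tree-neighbours $a,b$ of $v$; then $\{v,a\},\{v,b\}\in E(H)$. Now examine the induced subgraph of $H$ on $\{v,a,b\}$: if $\{a,b\}\in E(H)$ it is a triangle, and if $\{a,b\}\notin E(H)$ it is the path $a - v - b$ on three vertices. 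Either way $H$ contains the required induced subgraph, and complementing on the vertex set $\{v,a,b\}$ produces an induced $\Gamma_1$ or $\Gamma_2$ in $\Gamma$.

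Alternatively one can argue directly in $\Gamma$ and bypass the complement: an indecomposable graph on at least two vertices is not complete (since the complete graph $K_n$ is a join for $n\geq 2$), so it has a pair of non-adjacent vertices; if $\Gamma$ is disconnected one either separates an edge from a vertex in another component to find $\Gamma_2$, or uses three isolated vertices to find $\Gamma_1$, while if $\Gamma$ is connected the first three vertices of a shortest path between two non-adjacent vertices induce $\Gamma_2$. I expect no serious obstacle here: the content is the one-line spanning-tree (or shortest-path) lemma, and the only points requiring care are the bookkeeping in the complement correspondence and the insistence that both parts $V_1,V_2$ of a join be nonempty, both of which are routine.
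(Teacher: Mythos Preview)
Your main argument via the complement is correct and is a genuinely different route from the paper's. The paper argues by contradiction and case analysis: assuming no induced $\Gamma_1$ or $\Gamma_2$, it forces an induced path $\Gamma_3$ on three vertices, then enumerates the connected graphs on four vertices to see which ones contain $\Gamma_3$ but avoid $\Gamma_1,\Gamma_2$, and finally adds a fifth vertex to reach a contradiction. Your approach is cleaner: the observation ``$\Gamma$ indecomposable $\Leftrightarrow$ $\overline{\Gamma}$ connected'' together with the spanning-tree pigeonhole (some vertex of a tree on $\geq 3$ vertices has degree $\geq 2$) dispatches the problem in one stroke, with no enumeration of small graphs. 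The trade-off is that the paper's argument stays entirely inside $\Gamma$ and is self-contained at the level of pictures, whereas yours requires the reader to track the complement correspondence; in exchange you avoid all casework.

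One slip in your alternative direct argument: in the connected case, the first three vertices $v_0,v_1,v_2$ of a shortest path between non-adjacent vertices induce a path $P_3$ with \emph{two} edges (this is the paper's $\Gamma_3$), not $\Gamma_2$, which has only the single edge $\{x,y\}$ and an isolated vertex $z$. The fix is short but uses indecomposability rather than connectedness: pick non-adjacent $u,w$; if every other vertex were adjacent to both $u$ and $w$, then $\Gamma$ would be the join of $\{u,w\}$ with $V\setminus\{u,w\}$, contradicting indecomposability. Hence some $z\notin\{u,w\}$ misses at least one of $u,w$, and $\{u,w,z\}$ induces $\Gamma_1$ or $\Gamma_2$.
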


\begin{proof}

If $|V|=3$, then there are exactly $2$ indecomposable graphs with $3$ vertices as shown in 
\Cref{fig:3verticies}.

Let $|V|\geq 4$. Assume for contradiction that $\Gamma$ does not have an induced subgraph which is isomorphic to $\Gamma_1$ or $\Gamma_2$.
Then $\Gamma$ has an induced subgraph that is isomorphic to $\Gamma_3$ as shown in \Cref{fig:333verticies}. 

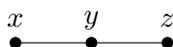
\begin{figure}[htb]
	\begin{center}
	\captionsetup{justification=centering}
		\begin{tikzpicture}

             \draw[fill=black]  (8,0) circle (2pt);
            \node at (8, 0.3) {$x$};
            \draw[fill=black]  (9,0) circle (2pt);
            \node at (9, 0.3) {$y$};
            \draw[fill=black]  (10,0) circle (2pt);
            \node at (10, 0.3) {$z$};
            \draw (8,0)--(9,0);
            \draw (9,0)--(10,0);
            
\end{tikzpicture}
        \caption{A graph $\Gamma_3$.}
	      \label{fig:333verticies}
    \end{center}
\end{figure}

Since $n\geq 4$, there exists a vertex $v\in V$ that is connected to the subgraph $\Gamma_3$, since otherwise $\Gamma$ would have an induced subgraph of type $\Gamma_2$. Let $\Omega$ be the induced subgraph of $\Gamma$ with the vertex set $V(\Gamma_3)\cup\left\{v\right\}$.  
Let us consider all connected graphs with $4$ vertices, see \Cref{fig:connectedgraphs4vertices}.
\begin{figure}[htb]
	\begin{center}
	\captionsetup{justification=centering}
		\begin{tikzpicture}
			\draw[fill=black]  (0,0) circle (2pt);
            \draw[fill=black]  (0,1) circle (2pt);
            \draw[fill=black]  (1,0) circle (2pt);
            \draw[fill=black]  (1,1) circle (2pt);
            \draw (0,0)--(1,0);
            \draw (0,1)--(0,0);
            \draw (1,0)--(1,1);
            \node at (0.5, -0.5) {$\Omega_1$};

            \draw[fill=black]  (3,0) circle (2pt);
            \draw[fill=black]  (3,1) circle (2pt);
            \draw[fill=black]  (4,0) circle (2pt);
            \draw[fill=black]  (4,1) circle (2pt);
            \draw (3,0)--(4,0);
            \draw (3,0)--(4,1);
            \draw (3,0)--(3,1);
            \node at (3.5, -0.5) {$\Omega_2$};

            \draw[fill=black]  (6,0) circle (2pt);
            \draw[fill=black]  (6,1) circle (2pt);
            \draw[fill=black]  (7,0) circle (2pt);
            \draw[fill=black]  (7,1) circle (2pt);
            \draw (6,0)--(6,1);
            \draw (6,0)--(7,0);
            \draw (7,0)--(7,1);
            \draw (6,1)--(7,1);
            \node at (6.5, -0.5) {$\Omega_3$};

            \draw[fill=black]  (0,-2) circle (2pt);
            \draw[fill=black]  (0,-3) circle (2pt);
            \draw[fill=black]  (1,-2) circle (2pt);
            \draw[fill=black]  (1,-3) circle (2pt);
            \draw (0,-3)--(1,-3);
            \draw (0,-3)--(1,-2);
            \draw (0,-3)--(0,-2);
            \draw (1,-3)--(1,-2);
            \node at (0.5, -3.5) {$\Omega_4$};

            \draw[fill=black]  (3,-2) circle (2pt);
            \draw[fill=black]  (3,-3) circle (2pt);
            \draw[fill=black]  (4,-2) circle (2pt);
            \draw[fill=black]  (4,-3) circle (2pt);
            \draw (3,-3)--(4,-3);
            \draw (3,-3)--(4,-2);
            \draw (3,-3)--(3,-2);
            \draw (3,-2)--(4,-2);
            \draw (4,-3)--(4,-2);
            \node at (3.5, -3.5) {$\Omega_5$};

            \draw[fill=black]  (6,-2) circle (2pt);
            \draw[fill=black]  (6,-3) circle (2pt);
            \draw[fill=black]  (7,-2) circle (2pt);
            \draw[fill=black]  (7,-3) circle (2pt);
            \draw (6,-3)--(7,-3);
            \draw (6,-3)--(7,-2);
            \draw (6,-3)--(6,-2);
            \draw (7,-3)--(7,-2);
            \draw (7,-3)--(6,-2);
            \draw (6,-2)--(7,-2);
            \node at (6.5, -3.5) {$\Omega_6$};

        \end{tikzpicture}
        \caption{Connected graphs with four vertices.}
	      \label{fig:connectedgraphs4vertices}
    \end{center}
\end{figure}
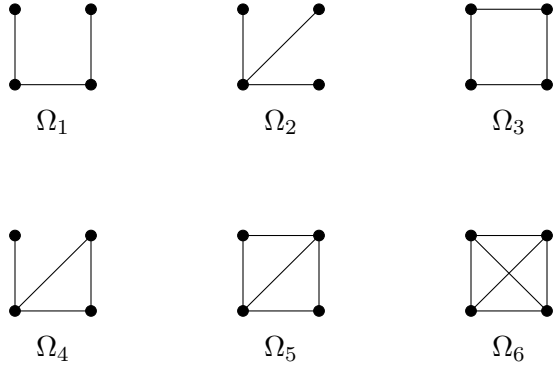

The graph $\Omega$ is connected, does not have an induced subgraph isomorphic to $\Gamma_1$ or $\Gamma_2$ and has an induced subgraph isomorphic to $\Gamma_3$. Thus $\Omega$ is isomorphic to $\Omega_3$ or $\Omega_5$.

If $\Omega\cong \Omega_3$, then there exists a vertex $w\in V$, $w\notin V(\Omega)$, since $\Omega_3$ is a join.
Now we consider the subgraph $\Delta$ induced by the vertex set $V(\Omega)\cup\left\{w\right\}$. The graph $\Delta$ is connected and every connected subgraph induced by $4$ vertices is isomorphic to $\Omega_3$ or $\Omega_5$ and this is not possible.

If $\Omega\cong \Omega_5$, then the same arguments as above lead to a contradiction.
Hence, $\Gamma$ has an induced subgraph that is isomorphic to $\Gamma_1$ or $\Gamma_2$.
\end{proof}

\section{The derived series of groups}
Let $G$ be a group and $[G,G]:=\langle\left\{ xyx^{-1}y^{-1}\mid x,y\in G\right\}\rangle$ be the \emph{commutator subgroup} of $G$. We define $$G^{(0)}:=G\text{ and }G^{(i+1)}:=[G^{(i)}, G^{(i)}]\text{ for }i\geq 0.$$ 
We write ${\rm dl}(G)=\infty$ if $G^{(i)}\neq G^{(i+1)}$ for all $i\geq 0$; otherwise 
$${\rm dl}(G):={\rm min}\left\{i\mid G^{(i)}=G^{(i+1)}\right\}.$$
The series
$G=G^{(0)}\supseteq G^{(1)}\supseteq G^{(2)}\supseteq\ldots$
is called the \emph{derived series} of $G$. We note that for $i\geq 0$ the subgroup $G^{(i)}$ is characteristic in $G$ and is therefore normal in $G$, see \cite[Theorem 5.21]{Rotman1995}.

We are interested in the image of the map ${\rm dl}$ for interesting classes of groups. For example, let $\mathcal{D}$ be the family consisting of all dihedral groups. By definition, a group $G$ is a \emph{dihedral group} if $G\cong \mathbb{Z}/2\mathbb{Z}*\mathbb{Z}/2\mathbb{Z}$ or if there exists $n\in\mathbb{N}_{\geq 2}$ such that 
$G\cong\langle x,y\mid x^2=1, y^2=1, (xy)^n=1\rangle$. It is easy to see that ${\rm dl}(\mathcal{D})=\left\{1,2\right\}$.

For $n\in\mathbb{N}$ we denote by $\mathcal{W}_n$ all Coxeter groups where the Coxeter graph has $n$ vertices.
In \cite{Jeong2006}, it was proven that ${\rm dl}(\mathcal{W}_3)=\left\{1,2,3,\infty\right\}$ and ${\rm dl}(\mathcal{W}_4)=\left\{1,2,3,4,\infty\right\}$. For the affine Coxeter group of type $\widetilde{D}_4$ we have ${\rm dl}(W_{\widetilde{D}_4})=5$, see \cite{EdjvetJeong2007}. Hence, $\left\{1,2,3,4,5,\infty\right\}\subseteq {\rm dl}(\mathcal{W}_5)$. It is not known if there exists a Coxeter group $W_\Gamma\in \mathcal{W}_5$ such that ${\rm dl}(W_\Gamma)\in\mathbb{N}_{\geq 6}$.

We now prove several lemmas. The first lemma shows that the map ${\rm dl}$ behaves very nicely to products of groups.
\begin{lemma}
\label{Products}
Let $G$ and $H$ be groups. Then ${\rm dl}(G\times H)={\rm max}\left\{{\rm dl}(G), {\rm dl}(H)\right\}$.
\end{lemma}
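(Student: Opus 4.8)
The plan is to show that the derived series of a direct product is the componentwise product of the derived series of the factors, namely that $(G\times H)^{(i)} = G^{(i)}\times H^{(i)}$ for every $i\geq 0$, and then to read off the derived length from this identity.

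First I would establish the base case of the induction, the claim $[G\times H, G\times H] = [G,G]\times [H,H]$. The essential observation is that commutators in a direct product are computed componentwise: for $(g_1,h_1),(g_2,h_2)\in G\times H$ one has $[(g_1,h_1),(g_2,h_2)] = ([g_1,g_2],[h_1,h_2])$. This immediately gives the inclusion $[G\times H,G\times H]\subseteq [G,G]\times[H,H]$. For the reverse inclusion, I would note that $[G,G]\times[H,H]$ is generated by the elements $([g_1,g_2],1)$ and $(1,[h_1,h_2])$, each of which is itself a commutator in $G\times H$, namely $[(g_1,1),(g_2,1)]$ and $[(1,h_1),(1,h_2)]$ respectively. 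The inductive step is then formal: assuming $(G\times H)^{(i)} = G^{(i)}\times H^{(i)}$, apply the base case with $G^{(i)}$ and $H^{(i)}$ in place of $G$ and $H$ to obtain $(G\times H)^{(i+1)} = G^{(i+1)}\times H^{(i+1)}$.

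With this identity in hand, I would translate the stabilization condition. A direct-factor decomposition shows that $G^{(i)}\times H^{(i)} = G^{(i+1)}\times H^{(i+1)}$ holds if and only if both $G^{(i)} = G^{(i+1)}$ and $H^{(i)} = H^{(i+1)}$. I would also use the elementary fact that the derived series is non-increasing and stabilizes permanently once it stabilizes: if $G^{(i)} = G^{(i+1)}$ then $G^{(j)} = G^{(j+1)}$ for all $j\geq i$. Consequently the smallest index at which the derived series of $G\times H$ becomes constant is precisely the smallest index at which both the series of $G$ and of $H$ have become constant, which is $\max\{{\rm dl}(G),{\rm dl}(H)\}$. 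Finally I would dispatch the infinite case: if, say, ${\rm dl}(G)=\infty$, then $G^{(i)}\neq G^{(i+1)}$ for all $i$, hence $(G\times H)^{(i)}\neq (G\times H)^{(i+1)}$ for all $i$ and ${\rm dl}(G\times H)=\infty$, consistent with the formula.

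I expect no serious obstacle; the only point requiring a little care is the reverse inclusion in the base case, where one must exhibit the generators of $[G,G]\times[H,H]$ explicitly as commutators, rather than merely as products of commutators, so that the computation of the full derived series goes through cleanly by induction.
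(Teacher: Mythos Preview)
Your proposal is correct and follows exactly the paper's approach: the paper's proof consists of the single assertion that $(G\times H)^{(i)}=G^{(i)}\times H^{(i)}$ for all $i\ge 0$, followed by the immediate conclusion. You have simply supplied the (standard) details behind that assertion and the passage from it to the derived-length equality.
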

\begin{proof}
For $i\geq 0$ we have $(G\times H)^{(i)}=G^{(i)}\times H^{(i)}$. Hence, 
${\rm dl}(G\times H)={\rm max}\left\{{\rm dl}(G), {\rm dl}(H)\right\}$.
\end{proof}

\begin{lemma}
\label{Quasi_Perfect_Epi}
Let $G$ and  $H$ be groups and $\psi\colon G\to H$ be a homomorphism. If $\psi$ is surjective, then ${\rm dl}(G)\geq {\rm dl}(H)$.  
\end{lemma}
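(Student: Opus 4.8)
The plan is to track how $\psi$ acts on the entire derived series rather than just on $G$ itself. The central claim is that a surjection carries the derived series of $G$ exactly onto that of $H$; that is, $\psi(G^{(i)})=H^{(i)}$ for every $i\geq 0$. I would establish this by induction on $i$. For $i=0$ the equality $\psi(G^{(0)})=\psi(G)=H=H^{(0)}$ is precisely the surjectivity hypothesis.

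For the inductive step, assume $\psi(G^{(i)})=H^{(i)}$. Since $\psi$ is a homomorphism it sends commutators to commutators, $\psi([x,y])=[\psi(x),\psi(y)]$, and more generally the image under $\psi$ of a subgroup generated by a set equals the subgroup generated by the images of those generators. Applying this to $G^{(i+1)}=[G^{(i)},G^{(i)}]=\langle [x,y]\mid x,y\in G^{(i)}\rangle$ gives
$$\psi(G^{(i+1)})=\langle [\psi(x),\psi(y)]\mid x,y\in G^{(i)}\rangle=[\psi(G^{(i)}),\psi(G^{(i)})]=[H^{(i)},H^{(i)}]=H^{(i+1)},$$
where the penultimate equality is the inductive hypothesis. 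This closes the induction.

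With the claim in hand the inequality follows quickly. If ${\rm dl}(G)=\infty$ there is nothing to prove, so suppose ${\rm dl}(G)=n<\infty$, so that $G^{(n)}=G^{(n+1)}$. Applying $\psi$ and using the claim yields $H^{(n)}=\psi(G^{(n)})=\psi(G^{(n+1)})=H^{(n+1)}$, whence ${\rm dl}(H)\leq n={\rm dl}(G)$, as desired.

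The argument is essentially routine, and the only point that genuinely requires care is the role of surjectivity. Without it one would obtain only $\psi(G^{(i)})=(\im\psi)^{(i)}$, which is a subgroup of $H^{(i)}$ but need not coincide with it; surjectivity is exactly what upgrades this containment to the equality $\psi(G^{(i)})=H^{(i)}$ that drives the final comparison.
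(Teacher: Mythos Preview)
Your proof is correct and follows essentially the same approach as the paper: both establish $\psi(G^{(i)})=H^{(i)}$ for all $i\geq 0$ (you spell out the induction explicitly, while the paper simply asserts it), and then conclude $H^{(n)}=H^{(n+1)}$ from $G^{(n)}=G^{(n+1)}$.
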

\begin{proof}
If ${\rm dl}(G)=\infty$, then there is nothing to prove. Thus assume that ${\rm dl}(G)=n$.
For $i\geq 0$ we have $\psi(G^{(i)})=\psi(G)^{(i)}=H^{(i)}$.
Since $G^{(n)}=G^{(n+1)}$, we obtain $H^{(n)}=H^{(n+1)}$. Hence $n\geq {\rm dl}(H)$.
\end{proof}

\begin{lemma}(\cite[p.14]{LyndonSchupp2001})
\label{FreeGroupinfinity}
Let $F_n$ be the free group of rank $n$. If $n\geq 2$, then ${\rm dl}(F_n)=\infty$.
In particular, if a free group $F$ has infinite rank, then ${\rm dl}(F)=\infty$.
\end{lemma}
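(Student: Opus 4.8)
The plan is to show that every term of the derived series of $F_n$ is again a nonabelian free group, so that the series never stabilizes. The engine of the argument is the following claim, which I will use as an induction step: \emph{if $F$ is a free group of rank at least $2$, then its commutator subgroup $[F,F]$ is again a free group of rank at least $2$.} Granting this, an easy induction shows that $F_n^{(i)}$ is a nonabelian free group for every $i\geq 0$: the base case is $F_n^{(0)}=F_n$, and if $F_n^{(i)}$ is free of rank $\geq 2$ then $F_n^{(i+1)}=[F_n^{(i)},F_n^{(i)}]$ is free of rank $\geq 2$ by the claim. Since a nonabelian free group $F$ has abelianization $F/[F,F]$ free abelian of its own rank, hence nontrivial, we get $F_n^{(i+1)}=[F_n^{(i)},F_n^{(i)}]\neq F_n^{(i)}$ for every $i$, which is precisely ${\rm dl}(F_n)=\infty$.

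To prove the claim, freeness of $[F,F]$ is immediate from the Nielsen--Schreier theorem, since $[F,F]$ is a subgroup of the free group $F$. The only remaining point is that $[F,F]$ is nonabelian, i.e.\ has rank at least $2$ rather than collapsing to $\mathbb{Z}$ or the trivial group. To see this I would exhibit two elements of $[F,F]$ that do not commute: fixing two distinct free generators $a,b$ of $F$, set $u:=[a,b]=aba^{-1}b^{-1}$ and $v:=[a^{-1},b]=a^{-1}bab^{-1}$. Both lie in $[F,F]$, and a direct comparison of the reduced words $uv$ and $vu$ (they already differ in their first letter) shows $uv\neq vu$. Hence $[F,F]$ contains a nonabelian subgroup and is therefore itself nonabelian; being free, it must have rank at least $2$.

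Finally, for the ``in particular'' statement, let $F$ be free of infinite rank. Choosing two basis elements and sending them to the two generators of $F_2$ while sending every other basis element to $1$ defines a surjective homomorphism $F\onto F_2$. By \Cref{Quasi_Perfect_Epi} we get ${\rm dl}(F)\geq {\rm dl}(F_2)=\infty$, so ${\rm dl}(F)=\infty$.

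I expect the main obstacle to be the nonabelianness in the claim: showing $[F,F]$ is free is a one-line appeal to Nielsen--Schreier, but ruling out that its rank drops to $1$ (which would terminate the derived series after finitely many steps) is the real content. The explicit non-commuting pair $u,v$ sidesteps the heavier Reidemeister--Schreier computation that $[F_2,F_2]$ has countably infinite rank; I only need rank $\geq 2$ at each stage, and that is exactly what the two witnesses deliver.
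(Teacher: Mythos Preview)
The paper does not actually prove this lemma; it is stated with a citation to \cite[p.14]{LyndonSchupp2001} and no proof is given. Your argument is a correct, self-contained proof: Nielsen--Schreier gives freeness of each $F_n^{(i)}$, and your explicit pair $u=[a,b]$, $v=[a^{-1},b]$ shows that the commutator subgroup of a free group of rank $\geq 2$ is again nonabelian (both $uv$ and $vu$ are reduced words of length $8$ beginning with $a$ and $a^{-1}$ respectively), so the induction goes through and $F_n^{(i)}\neq F_n^{(i+1)}$ for all $i$.

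One minor observation: your claim, as stated, already applies to free groups of arbitrary (possibly infinite) rank, since the witnesses $u,v$ live in the subgroup generated by any two basis elements. Hence the inductive argument directly yields ${\rm dl}(F)=\infty$ for $F$ of infinite rank as well, and the separate surjection onto $F_2$ via \Cref{Quasi_Perfect_Epi} in your final paragraph, while perfectly valid, is not strictly necessary.
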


\begin{lemma}
\label{DerivedSeriesFreeProduct}
Let $G$ and $H$ be non-trivial abelian groups. Then ${\rm dl}(G*H)\in\left\{2,\infty\right\}$. In particular, ${\rm dl}(G*H)=2$ if and only if $G\cong H\cong\mathbb{Z}/2\mathbb{Z}$.
\end{lemma}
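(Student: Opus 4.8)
The plan is to reduce everything to understanding the commutator subgroup $N:=[G*H,G*H]$, viewed as the kernel of the abelianization map, and to show that $N$ is a \emph{free} group whose rank is determined by the orders of $G$ and $H$. First I would record two easy reductions. Since $G$ and $H$ are abelian, the abelianization of $G*H$ is $G\times H$, which is non-trivial; hence $G*H$ is not perfect and $N=(G*H)^{(1)}\neq(G*H)^{(0)}$. Because the derived series of $G*H$ from the term $N$ onward is exactly the derived series of $N$, we get ${\rm dl}(G*H)=1+{\rm dl}(N)$, so it suffices to determine ${\rm dl}(N)$.

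The heart of the argument is the structural claim that $N$ is free of rank $(|G|-1)(|H|-1)$. I would prove freeness via Bass--Serre theory: $G*H$ acts on its Bass--Serre tree $T$ with quotient a single edge, with vertex stabilizers the conjugates of $G$ and of $H$ and with trivial edge stabilizers. Since $G$ and $H$ inject into the abelianization $G\times H$, the normal subgroup $N$ meets every vertex stabilizer trivially, so $N$ acts freely on $T$ and is therefore free, of rank equal to the first Betti number of the quotient graph $T/N$. Counting $N$-orbits then finishes the computation: the $N$-orbits of $G$-type vertices are in bijection with $H$, those of $H$-type vertices with $G$, and those of edges with $G\times H$, giving $|G|+|H|$ vertices and $|G|\cdot|H|$ edges. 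Hence the rank is $|G||H|-|G|-|H|+1=(|G|-1)(|H|-1)$, which is infinite when $G$ or $H$ is infinite. (Alternatively one could cite the classical determination of the commutator subgroup of a free product of abelian groups, or run Reidemeister--Schreier on $G*H\to G\times H$.)

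Finally I would assemble the conclusion. Since $|G|,|H|\geq 2$, the rank $(|G|-1)(|H|-1)$ is at least $1$, so $N$ is a non-trivial free group; by \Cref{FreeGroupinfinity} we have ${\rm dl}(N)=1$ when the rank equals $1$ and ${\rm dl}(N)=\infty$ when the rank is at least $2$ (including infinite rank). Therefore ${\rm dl}(G*H)=1+{\rm dl}(N)\in\left\{2,\infty\right\}$, and ${\rm dl}(G*H)=2$ precisely when $(|G|-1)(|H|-1)=1$, that is $|G|=|H|=2$, which for abelian groups means $G\cong H\cong\ZZ/2\ZZ$ (in which case $G*H$ is the infinite dihedral group and $N\cong\ZZ$).

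The main obstacle is the structural input that $N$ is free together with the correct rank count; once this is established, the dichotomy and the characterization of the value $2$ follow mechanically from the relation ${\rm dl}(G*H)=1+{\rm dl}(N)$ and \Cref{FreeGroupinfinity}. A secondary point to handle with care is the bookkeeping in the orbit count, where normality of $N$ and the triviality of $G\cap N$ and $H\cap N$ are exactly what makes the numbers of orbits come out to $|H|$, $|G|$ and $|G||H|$.
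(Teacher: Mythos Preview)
Your proposal is correct and follows essentially the same route as the paper: both reduce the question to the fact that $[G*H,G*H]$ is free of rank $(|G|-1)(|H|-1)$ and then invoke \Cref{FreeGroupinfinity}. The only difference is cosmetic---the paper cites this structural fact from Serre's \emph{Trees}, while you supply the Bass--Serre argument behind that citation.
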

\begin{proof}
The statement of the lemma follows from the fact that the commutator subgroup of $G*H$ is a free group of rank $(|G|-1)\cdot (|H|-1)$, see \cite[\S 1 Proposition 4]{Serre2003}. Since a non-abelian free group has infinite derived length by Lemma \ref{FreeGroupinfinity}, we obtain ${\rm dl}(G*H)=\infty$ if and only if $|G|\geq 3$ or $|H|\geq 3$.
\end{proof}

Let $G_\Gamma$ be a graph product of non-trivial cyclic groups. It is straightforward to verify that for every subset $T\subseteq V(\Gamma)$ the map $\rho'_T\colon V(\Gamma)\to G_T$ defined as follows: $v\mapsto v$ if $v\in T$ and $v\mapsto 1$ if $v\notin T$ induces a canonical  retraction $\rho_T\colon G_\Gamma\twoheadrightarrow G_T$.

\begin{thm}
\label{QuasiPerfectGraphProducts}
Let $G_\Gamma$ be a graph product of non-trivial cyclic groups. Then ${\rm dl}(G_\Gamma)\in\left\{1,2,\infty\right\}$. In particular, if $\Gamma$ is not a join, then
\begin{enumerate}
\item ${\rm dl}(G_\Gamma)=1$ if and only if $V(\Gamma)=\left\{v_1\right\}$. 
\item ${\rm dl}(G_\Gamma)=2$ if and only if $V(\Gamma)=\left\{v,w\right\}$, $E(\Gamma)=\emptyset$ and $f(v)=f(w)=2$.
\end{enumerate}
\end{thm}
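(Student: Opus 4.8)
The plan is to compute $\mathrm{dl}(G_\Gamma)$ by first reducing to the case where $\Gamma$ is indecomposable and then analysing indecomposable graphs by their number of vertices. Recall that $\Gamma$ is a join exactly when its complement $\overline{\Gamma}$ is disconnected, and that writing $\Gamma = \Gamma_1 * \cdots * \Gamma_m$ for the join of the subgraphs induced on the connected components of $\overline{\Gamma}$ expresses $\Gamma$ as a join of indecomposable graphs. Since in a join every vertex of one factor is adjacent to, hence commutes with, every vertex of the other, this decomposition yields $G_\Gamma \cong G_{\Gamma_1} \times \cdots \times G_{\Gamma_m}$, so \Cref{Products} gives $\mathrm{dl}(G_\Gamma) = \max_i \mathrm{dl}(G_{\Gamma_i})$. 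It therefore suffices to prove $\mathrm{dl}(G_{\Gamma_i}) \in \{1,2,\infty\}$ for indecomposable $\Gamma_i$ and to pin down when the finite values occur; the two displayed equivalences are then the special case in which $\Gamma$ is itself indecomposable.

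First I would dispose of the small indecomposable graphs. If $|V(\Gamma)| = 1$ then $G_\Gamma$ is a non-trivial cyclic group, hence abelian and non-trivial, so $\mathrm{dl}(G_\Gamma) = 1$. If $|V(\Gamma)| = 2$ then indecomposability forces $E(\Gamma) = \emptyset$, so $G_\Gamma \cong \langle v\rangle * \langle w\rangle$ is a free product of two non-trivial cyclic groups and \Cref{DerivedSeriesFreeProduct} gives $\mathrm{dl}(G_\Gamma) \in \{2,\infty\}$, with the value $2$ occurring exactly when $f(v) = f(w) = 2$.

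The core step is to show $\mathrm{dl}(G_\Gamma) = \infty$ for every indecomposable $\Gamma$ with $|V(\Gamma)| \ge 3$. By \Cref{SubgraphsIndecomposableGraph} such a $\Gamma$ contains an induced subgraph isomorphic to $\Gamma_1$ (three pairwise non-adjacent vertices) or to $\Gamma_2$ (an edge together with an isolated vertex), and since each canonical retraction $\rho_T \colon G_\Gamma \twoheadrightarrow G_T$ is surjective, \Cref{Quasi_Perfect_Epi} reduces everything to showing that the corresponding $G_T$ has infinite derived length. For $\Gamma_2$ the group $G_T \cong (\langle x\rangle \times \langle y\rangle) * \langle z\rangle$ is a free product of two non-trivial abelian groups, the first of order at least $4$, so $\mathrm{dl}(G_T) = \infty$ by \Cref{DerivedSeriesFreeProduct}. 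For $\Gamma_1$ the group $G_T \cong \langle x\rangle * \langle y\rangle * \langle z\rangle$ is a free product of three non-trivial cyclic groups; if some pair of the three factors is not $\mathbb{Z}/2\mathbb{Z} * \mathbb{Z}/2\mathbb{Z}$, then retracting $G_\Gamma$ onto that pair of vertices and applying \Cref{DerivedSeriesFreeProduct} and \Cref{Quasi_Perfect_Epi} already gives $\mathrm{dl}(G_\Gamma) = \infty$.

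I expect the one genuinely new computation, and the main obstacle, to be the remaining case $\mathbb{Z}/2\mathbb{Z} * \mathbb{Z}/2\mathbb{Z} * \mathbb{Z}/2\mathbb{Z}$, where no pair of factors is covered by \Cref{DerivedSeriesFreeProduct}. Here I would argue exactly as in the proof of that lemma: because the three factors are abelian, the commutator subgroup meets no conjugate of a factor, so by the Kurosh subgroup theorem it is free, and an Euler-characteristic (Nielsen--Schreier) count shows it is free of rank $5$, in particular non-abelian. Hence it has infinite derived length by \Cref{FreeGroupinfinity}, and therefore so does $\mathbb{Z}/2\mathbb{Z} * \mathbb{Z}/2\mathbb{Z} * \mathbb{Z}/2\mathbb{Z}$. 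Assembling the indecomposable cases with the first paragraph yields $\mathrm{dl}(G_\Gamma) \in \{1,2,\infty\}$ in general, while for indecomposable $\Gamma$ the classification shows that the value $1$ forces $|V(\Gamma)| = 1$ and the value $2$ forces $|V(\Gamma)| = 2$, $E(\Gamma) = \emptyset$ and $f(v) = f(w) = 2$, which are precisely statements (1) and (2).
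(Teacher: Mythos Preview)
Your proof is correct and follows the paper's overall strategy: reduce to indecomposable $\Gamma$ via \Cref{Products}, handle $|V(\Gamma)|\le 2$ directly, and for $|V(\Gamma)|\ge 3$ invoke \Cref{SubgraphsIndecomposableGraph} together with the canonical retractions and \Cref{DerivedSeriesFreeProduct}. The only difference lies in your treatment of the $\Gamma_1$ case. You split into subcases and appeal to Kurosh plus an Euler-characteristic count to deal with $\mathbb{Z}/2\mathbb{Z}*\mathbb{Z}/2\mathbb{Z}*\mathbb{Z}/2\mathbb{Z}$ separately; the paper instead composes the retraction onto $\{x,y,z\}$ with one further surjection
\[
\langle x,y\rangle*\langle z\rangle\;\twoheadrightarrow\;(\langle x\rangle\times\langle y\rangle)*\langle z\rangle
\]
(adding the relation $[x,y]=1$), which handles $\Gamma_1$ and $\Gamma_2$ uniformly and places \Cref{DerivedSeriesFreeProduct} in play at once since $|\langle x\rangle\times\langle y\rangle|\ge 4$. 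Your Kurosh detour is valid but unnecessary.
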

\begin{proof}
Using the result of Lemma \ref{Products} we can assume that $\Gamma$ is decomposable. Let $V(\Gamma)=\left\{v_1,\ldots, v_n\right\}$. If $n=1$, then $G_\Gamma$ is cyclic and therefore ${\rm dl}(G_\Gamma)=1$.
If $n=2$, then $\Gamma$ is a discrete graph. By Lemma \ref{DerivedSeriesFreeProduct} follows that ${\rm dl}(G_\Gamma)\geq 2$. Moreover, ${\rm dl}(G_\Gamma)=2$ if and only if $f(v)=f(w)=2$, otherwise ${\rm dl}(G_\Gamma)=\infty$.

Let $n\geq 3$. By Proposition \ref{SubgraphsIndecomposableGraph} we know that $\Gamma$ has an induced subgraph $\Omega$ that is isomorphic to $\Gamma_1$ or $\Gamma_2$ as shown in \Cref{fig:3verticies}.
We consider the canonical projections 
$$G_\Gamma\twoheadrightarrow \langle x,y\rangle *\langle z\rangle\twoheadrightarrow (\langle x\rangle\times\langle y\rangle) * \langle z\rangle.$$ 
By Lemma \ref{Quasi_Perfect_Epi} and Lemma \ref{DerivedSeriesFreeProduct} follows that 
$${\rm dl}(G_\Gamma)\geq {\rm dl}((\langle x\rangle\times\langle y\rangle) * \langle z\rangle)=\infty.$$
\end{proof}

\subsection{Even Dyer groups}
Let $(\Gamma, m, f)$ be a Dyer graph and $D_\Gamma$ be the associated Dyer group. By definition, $D_\Gamma$ is called \emph{even} if $E(\Gamma)=\emptyset$ or $m(E(\Gamma))\subseteq 2\mathbb{N}$.
There are a number of techniques that can be applied more easily to even Dyer groups than to arbitrary Dyer groups. For instance, it is straightforward to verify that for every $T\subseteq V(\Gamma)$ there exists a retraction $\rho_T\colon D_\Gamma\twoheadrightarrow D_T$. The epimorphism $\rho_T$ is induced by the map $\rho_T'\colon V(\Gamma)\to D_T$ where $\rho_T'(v)=v$ if $v\in T$ and $\rho_T'(v)=1$ if $v\notin T$.

\begin{prop}
\label{evenDyerGroups}
Let $D_\Gamma$ be an even Dyer group. Then $D_\Gamma$ is quasi-perfect if and only if $\Gamma$ is complete and $E(\Gamma)=\emptyset$ or $m(E(\Gamma))=\left\{2\right\}$.
\end{prop}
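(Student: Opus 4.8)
The plan is to reduce everything to two-vertex induced subgraphs, exploiting the fact that for an even Dyer group the retraction $\rho_T\colon D_\Gamma\twoheadrightarrow D_T$ exists for \emph{every} $T\subseteq V(\Gamma)$ (this is what makes the even case cleaner, and is why I will not need \Cref{SubgraphsIndecomposableGraph} here, unlike in \Cref{QuasiPerfectGraphProducts}). I first observe that the stated combinatorial condition is precisely the condition that $D_\Gamma$ be abelian. Indeed, if $\Gamma$ is complete with $E(\Gamma)=\emptyset$ then $\Gamma$ has a single vertex and $D_\Gamma$ is cyclic, while if $\Gamma$ is complete with $m(E(\Gamma))=\{2\}$ then each edge relation $\pi(v,w,2)=\pi(w,v,2)$ reads $vw=wv$, so all generators commute and $D_\Gamma\cong\prod_{v\in V(\Gamma)}\langle v\rangle$ is abelian. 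In either case $D_\Gamma$ is a nontrivial abelian group, hence $[D_\Gamma,D_\Gamma]=1$ and ${\rm dl}(D_\Gamma)=1$, so $D_\Gamma$ is quasi-perfect. This settles the ``if'' direction.

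For the ``only if'' direction I would argue by contraposition: assuming the combinatorial condition fails, I exhibit a two-generator retract of derived length at least $2$ and then conclude ${\rm dl}(D_\Gamma)\geq 2$ via \Cref{Quasi_Perfect_Epi}, so that $D_\Gamma$ is not quasi-perfect. There are two cases. If $\Gamma$ is not complete, pick non-adjacent vertices $v,w$ and set $T=\{v,w\}$; then $D_T\cong\langle v\rangle * \langle w\rangle$ is a free product of two nontrivial cyclic groups, so ${\rm dl}(D_T)\in\{2,\infty\}$ by \Cref{DerivedSeriesFreeProduct}, giving ${\rm dl}(D_\Gamma)\geq 2$. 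If instead $\Gamma$ is complete but some edge $\{v,w\}$ carries a label $m\neq 2$, then evenness forces $m\geq 4$ and the Dyer-graph condition forces $f(v)=f(w)=2$; taking $T=\{v,w\}$, the subgroup $D_T=\langle v,w\mid v^2,\,w^2,\,\pi(v,w,m)=\pi(w,v,m)\rangle$ is the dihedral group of order $2m\geq 8$. Since this group is non-abelian, it has derived length exactly $2$ (recall ${\rm dl}(\mathcal{D})=\{1,2\}$), whence again ${\rm dl}(D_\Gamma)\geq 2$.

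The only points requiring care are the identifications of these two-generator subgroups, both of which rest on Dyer's theorem that $D_T\cong D_\Delta$ for the induced subgraph $\Delta$ on $T$, already recalled in the introduction. For non-adjacent $v,w$ the induced subgraph is discrete, so $D_T$ is genuinely the free product; for an adjacent pair with even label $m\geq 4$ I must verify that $\pi(v,w,m)=\pi(w,v,m)$ together with $v^2=w^2=1$ is equivalent to $(vw)^m=1$, i.e. presents the dihedral group of order $2m$. I expect the main (and only mildly technical) obstacle to be this last standard Coxeter-theoretic identity, together with confirming that the resulting dihedral group is metabelian and non-abelian for $m\geq 3$, so that its derived length is exactly $2$; everything else is an immediate application of \Cref{Quasi_Perfect_Epi} and \Cref{DerivedSeriesFreeProduct}.
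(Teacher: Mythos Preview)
Your proposal is correct and follows essentially the same approach as the paper: both directions are handled identically, with the ``if'' direction reducing to $D_\Gamma$ being abelian, and the ``only if'' direction using the even-Dyer retractions $\rho_T$ onto two-vertex induced subgroups---a free product $\langle v\rangle*\langle w\rangle$ when $v,w$ are non-adjacent (invoking \Cref{DerivedSeriesFreeProduct}), and a dihedral group of order $2m\geq 8$ when $\{v,w\}$ is an edge with label $m\geq 4$---in each case concluding ${\rm dl}(D_\Gamma)\geq 2$ via \Cref{Quasi_Perfect_Epi}. The paper's write-up is terser (it simply asserts ${\rm dl}(\langle v,w\rangle)=2$ in the dihedral case), but the argument is the same.
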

\begin{proof}
Assume first that $\Gamma$ is complete and $E(\Gamma)=\emptyset$ or $m(E(\Gamma))=\left\{2\right\}$. It is easy to see that in this case $D_\Gamma$ is abelian and therefore ${\rm dl}(D_\Gamma)=1$.

For the other direction, assume for contradiction that there exist vertices $v,w\in V(\Gamma)$, $v\neq w$ such that $\left\{v,w\right\}\notin E(\Gamma)$. Then we have a retraction $D_\Gamma\twoheadrightarrow \langle v\rangle*\langle w\rangle$. By Lemma \ref{Quasi_Perfect_Epi} and Lemma \ref{DerivedSeriesFreeProduct} follows that ${\rm dl}(D_\Gamma)\geq 2$. Thus, if $D_\Gamma$ is quasi-perfect, then $\Gamma$ is complete. Furthermore, if $\Gamma$ has an edge $\left\{v,w\right\}$ with label $\geq 4$, then using the retraction $D_\Gamma\twoheadrightarrow \langle v,w\rangle$ we obtain ${\rm dl}(D_\Gamma)\geq{\rm dl}(\langle v,w\rangle)=2$. Hence, every edge label in $\Gamma$ is equal to $2$.
\end{proof}

\subsection{Quasi-perfect Dyer groups}
Let $(\Gamma, m, f)$ be a Dyer graph and $D_\Gamma$ be the associated Dyer group. For a prime number $p$ let $\Gamma^p$ be the graph obtained from $\Gamma$ by removing all edges whose labels are divisible by $p$.  Let $V_1,\ldots, V_k$ be the vertex sets of the connected components of $\Gamma^2$ and $\Delta_1,\ldots,\Delta_k\subseteq\Gamma$ be the induced subgraphs. 
In order to show Theorem \ref{QuasiPerfectDyerGroups} we need several types of quotient operations on a Dyer group $D_\Gamma$. 
\begin{enumerate}
\item We define $V_{\geq 3}:=\left\{v\in V(\Gamma)\mid f(v)\geq 3\right\}$. Note that $D_{V_{\geq 3}}$ is a graph product of cyclic groups. We have a canonical retraction 
$$\rho_{V_{\geq 3}}\colon D_\Gamma\twoheadrightarrow D_{V_{\geq 3}}.$$

\item For $i=1,\ldots, k$ we have also a canonical retraction $$\rho_{V_i}\colon D_\Gamma\twoheadrightarrow D_{V_i}.$$
\item We want to point  out that for $i=1,\ldots, k$ the subgroup $D_{\Delta_i}$ is cyclic or is a Coxeter group. The abelianization of a Coxeter group $W_\Delta$ is isomorphic to $(\mathbb{Z}/2\mathbb{Z})^l$ where $l$ is the number of connected components in $\Delta^2$, see \cite[Proposition 2.2]{MollerVarghese2023}. Hence, if $D_{\Delta_i}$ is a Coxeter group, then all elements in $V(\Delta_i)$ are conjugate and the abelianization of $D_{\Delta_i}$ is isomorphic to $\mathbb{Z}/2\mathbb{Z}$. 

For $i=1,\ldots, k$ we fix $w_i\in V_i$. For each pair $(i,j)\in\left\{1,\ldots, k\right\}\times\left\{1,\ldots, k\right\}$, $1\leq i<j\leq k$ let 
$$a_{i,j}:={\rm gcd}\left\{m(\left\{v_i, v_j\right\})\mid v_i\in V_i, v_j\in V_j \right\}.$$
We define a new Dyer graph $(\Omega, m_\Omega, f_\Omega)$ which is obtained from $\Gamma$ as follows:
the vertex set of $\Omega$ is equal to $\left\{w_1,\ldots, w_k\right\}$. We define $f_\Omega(w_i):=f(w_i)$ for $i=1,\ldots, k$.
Two vertices $w_i, w_j$ are connected by an edge with label $a_{i,j}$ if and only if $a_{i,j}\neq\infty$. In particular, $D_\Omega$ is an even Dyer group, see \Cref{fig:GammaandOmega}. 

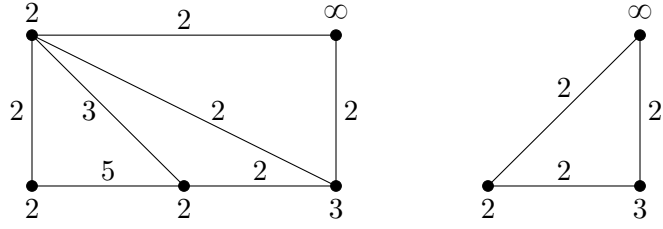
\begin{figure}[htb]
	\begin{center}
	\captionsetup{justification=centering}
        \begin{tikzpicture}
			\draw[fill=black]  (0,0) circle (2pt);
            \node at (0, -0.3) {$2$};
            \draw[fill=black]  (2,0) circle (2pt);
            \node at (2, -0.3) {$2$};
            \draw[fill=black]  (4,0) circle (2pt);
            \node at (4, -0.3) {$3$};
            \draw (0,0)--(2,0);
            \node at (1, 0.2) {$5$};
            \draw (0,0)--(0,2);
            \node at (-0.2, 1) {$2$};

            \draw (0,2)--(4,2);
            \node at (2, 2.2) {$2$};

            \draw (4,0)--(4,2);
            \node at (4.2, 1) {$2$};

            \draw (2,0)--(4,0);
            \node at (3, 0.2) {$2$};

            \draw (0,2)--(2,0);
            \node at (0.75, 1) {$3$};

            \draw (0,2)--(4,0);
            \node at (2.45, 1) {$2$};

           \draw[fill=black]  (0,2) circle (2pt);
            \node at (0, 2.3) {$2$};
           \draw[fill=black]  (4,2) circle (2pt);
           \node at (4, 2.3) {$\infty$};
            \draw[fill=black]  (8,2) circle (2pt);
           \node at (8, 2.3) {$\infty$};

            \draw[fill=black]  (6,0) circle (2pt);
           \node at (6, -0.3) {$2$};

            \draw[fill=black]  (8,0) circle (2pt);
           \node at (8, -0.3) {$3$};

           \draw (6,0)--(8,0);
           \draw (6,0)--(8,2);
           \draw (8,0)--(8,2);
            \node at (7, 0.2) {$2$};
           \node at (8.2, 1) {$2$};
            \node at (7, 1.3) {$2$};
\end{tikzpicture}
        \caption{A Dyer graph $\Gamma$ and the corresponding even Dyer graph $\Omega$.}
	      \label{fig:GammaandOmega}
    \end{center}
\end{figure}

We define a map $\psi\colon V(\Gamma)\to D_\Omega$ as follows: Let $v\in V(\Gamma)$, then there exists $i\in \left\{1,\ldots, k\right\}$ such that $v\in V_i$ and we define $\psi(v):=w_i$. It is straightforward to verify that the map $\psi$ induces an epimorphism
$$\Psi\colon D_\Gamma\twoheadrightarrow D_\Omega.$$

\end{enumerate}

\begin{thm}
\label{QuasiPerfectDyerGroups}
Let $(\Gamma, m, f)$ be a Dyer graph and $D_\Gamma$ be the associated Dyer group. Let $V_1,\ldots, V_k$ be the vertex sets of the connected components of $\Gamma^2$ and $\Delta_1,\ldots,\Delta_k\subseteq\Gamma$ be the induced subgraphs. 

Then $D_\Gamma$ is quasi-perfect if and only if the following hold:
\begin{enumerate}
\item[(i)] For $i\in\left\{1,\ldots, k\right\}$ and for every prime $p$ the graph $\Delta^p_i$ is connected.
\item[(ii)] For each pair $(i,j)$, $1\leq i<j\leq k$ there exist vertices $v\in V_i$ and $w\in V_j$ such that $m(\left\{v,w\right\})=2$. 
\end{enumerate}
\end{thm}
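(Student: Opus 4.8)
The whole statement concerns the condition ${\rm dl}(D_\Gamma)\le 1$, which is equivalent to $D_\Gamma^{(1)}=D_\Gamma^{(2)}$, i.e.\ to the metabelian quotient $\overline D:=D_\Gamma/D_\Gamma^{(2)}$ being abelian. Since $\overline D$ is generated by the images of the vertices, it is abelian \iff $[\bar v,\bar w]=1$ for all $v,w\in V(\Gamma)$, so the plan is to test this one commutator at a time, splitting into the case that $v,w$ lie in the same component $V_i$ and the case that they lie in different ones. Throughout I use the structural facts recorded before the statement: $D_{\Delta_i}$ is cyclic when $|V_i|=1$ and a Coxeter group with $\Delta_i^2$ connected when $|V_i|\ge 2$ (so $f\equiv 2$ on $V_i$), in which latter case all vertices of $V_i$ are conjugate and the abelianization $D_{\Delta_i}^{\mathrm{ab}}\cong\mathbb{Z}/2\mathbb{Z}$; moreover a vertex with $f\ge 3$ is a singleton component of $\Gamma^2$ all of whose incident edges carry the label $2$.

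For the ``if'' direction assume (i) and (ii). By \cite[Theorem 5.2]{BrooksbankPiggott2012}, condition (i) says that each Coxeter group $D_{\Delta_i}$ is quasi-perfect (the cyclic $D_{\Delta_i}$ are trivially so), hence $D_{\Delta_i}^{(1)}=D_{\Delta_i}^{(2)}\subseteq D_\Gamma^{(2)}$. This has two consequences in $\overline D$. First, any same-component commutator $[v,w]$ with $v,w\in V_i$ lies in $D_{\Delta_i}^{(1)}\subseteq D_\Gamma^{(2)}$ and therefore dies. Second, all vertices of a fixed $V_i$ have the same image $\bar\tau_i$ in $\overline D$: for $v,w\in V_i$ the element $vw^{-1}$ maps to $0$ in $D_{\Delta_i}^{\mathrm{ab}}$ and so lies in $D_{\Delta_i}^{(1)}=D_{\Delta_i}^{(2)}\subseteq D_\Gamma^{(2)}$. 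Now take $v\in V_i$, $w\in V_j$ with $i\ne j$ and choose by (ii) a label-$2$ edge $\{p,q\}$ with $p\in V_i$, $q\in V_j$; then $\bar v=\bar p=\bar\tau_i$ and $\bar w=\bar q=\bar\tau_j$, whence $[\bar v,\bar w]=[\bar p,\bar q]=\overline{[p,q]}=1$. Thus every generator-commutator is trivial, $\overline D$ is abelian, and $D_\Gamma$ is quasi-perfect.

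For the ``only if'' direction I would exploit the quotient operations together with \Cref{Quasi_Perfect_Epi}. Condition (i) is immediate: the retraction $\rho_{V_i}\colon D_\Gamma\twoheadrightarrow D_{\Delta_i}$ shows $D_{\Delta_i}$ is quasi-perfect, and for Coxeter $D_{\Delta_i}$ \cite[Theorem 5.2]{BrooksbankPiggott2012} turns this into connectivity of every $\Delta_i^p$ (for cyclic $D_{\Delta_i}$ there is nothing to prove). For the cross condition I would pass to the even Dyer group through $\Psi\colon D_\Gamma\twoheadrightarrow D_\Omega$: the quotient $D_\Omega$ is again quasi-perfect and even, so \Cref{evenDyerGroups} forces $\Omega$ to be complete with all labels $2$, that is $a_{i,j}=2$ for every pair. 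This is exactly the step in which $D_\Omega$ controls ${\rm dl}(D_\Gamma)$ from below. When one of $V_i,V_j$ is a singleton with $f\ge 3$, or when both components are singletons, every cross edge is already forced to carry label $2$, so $a_{i,j}=2$ is literally the existence of the edge required in (ii).

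The delicate point, and the one I expect to be the crux, is the cross condition for a pair in which a Coxeter component receives several cross edges of label $\ge 4$. There the individual labels are only constrained to have $\gcd$ equal to $2$, so the passage from ``$a_{i,j}=2$'' (what $D_\Omega$ records) to a genuine label-$2$ edge is not formal. The clean bookkeeping is again in $\overline D$: a cross edge of label $\mu$ between $V_i$ and $V_j$ imposes $(\bar\tau_i\bar\tau_j)^{\mu}=1$, hence all of them together impose $(\bar\tau_i\bar\tau_j)^{a_{i,j}}=1$, and since $\bar\tau_i,\bar\tau_j$ are involutions one has $[\bar\tau_i,\bar\tau_j]=(\bar\tau_i\bar\tau_j)^2$; so the commutator vanishes precisely when $a_{i,j}=2$, and one must check that the order of $\bar\tau_i\bar\tau_j$ in $\overline D$ is bounded below by its order $a_{i,j}$ in $D_\Omega$ in order for $a_{i,j}\ge 4$ really to obstruct quasi-perfectness. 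Reconciling the edge-theoretic hypothesis in (ii) with the $\gcd$ that $D_\Omega$ sees, and combining it with \cite[Theorem 5.2]{BrooksbankPiggott2012} for the components, is where the argument requires the most care.
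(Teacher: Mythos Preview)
Your proof follows the paper's essentially line for line. The ``if'' direction---showing that $D_\Gamma/D_\Gamma^{(2)}$ is abelian by identifying all vertices of each $V_i$ modulo $D_\Gamma^{(2)}$ (via quasi-perfectness of $D_{\Delta_i}$) and then invoking (ii) for cross-component commutation---is exactly the paper's argument. For ``only if'' you use the same retractions $\rho_{V_i}$ together with the Brooksbank--Piggott criterion for (i), and the same epimorphism $\Psi\colon D_\Gamma\twoheadrightarrow D_\Omega$ together with \Cref{evenDyerGroups} to obtain $a_{i,j}=2$ for (ii).

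Where you diverge is only in tone: you flag the passage from $a_{i,j}=2$ to the existence of an actual label-$2$ cross-edge as ``the delicate point'' and leave it open. The paper supplies no additional argument here either; immediately after obtaining $a_{i,j}=2$ it simply writes ``Thus, for each pair \ldots\ there exist vertices $v\in V_i$ and $w\in V_j$ such that $m(\{v,w\})=2$.'' Your extra computation in $\overline D$ in the final paragraph is redundant with the $\Psi$-argument (it only recovers the same $\gcd$ information) and does not bridge this gap either. So the two proofs coincide up to this step, and your hesitation concerns precisely the inference the paper treats as immediate. In fact your hesitation looks well-placed: take two Coxeter components each satisfying (i) (for instance triangles with pairwise coprime odd edge-labels) and join them only by cross-edges of labels $4$ and $6$; then $a_{i,j}=\gcd(4,6)=2$ and your own $\overline D$-computation shows $(\bar\tau_i\bar\tau_j)^2=1$, hence $D_\Gamma$ is quasi-perfect, yet no label-$2$ cross-edge exists. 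Thus condition (ii) as literally stated appears strictly stronger than what either argument establishes; the natural output of the method is $a_{i,j}=2$ for every pair $i<j$, which also suffices for the converse direction by the same $\overline D$-reasoning.
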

\begin{proof}
Let $\Omega$ be the even Dyer graph that is associated to $\Gamma$ as described in (3).

Let us first assume that $D_\Gamma$ is quasi-perfect. Using the retractions in (2) we obtain $1={\rm dl}(D_\Gamma)\geq {\rm dl}(D_{\Delta_i})$. Hence, $D_{\Delta_i}$ is quasi-perfect for $i=1,\ldots, k$. As we mentioned before,  $D_{\Delta_i}$ is cyclic or is a Coxeter group.
By \cite[Proposition 5.1]{BrooksbankPiggott2012} follows that
for every prime number $p$ the graph  $\Delta_i^p$  is connected.

Now we consider the epimorphism 
$\Psi\colon D_\Gamma\twoheadrightarrow D_\Omega$.
We have
$$1={\rm dl}(D_\Gamma)\geq {\rm dl}(D_\Omega).$$
Hence, ${\rm dl}(D_\Omega)=1$. Since $D_\Omega$ is even, we can apply Proposition \ref{evenDyerGroups} that  shows that $D_\Omega$ is abelian. In particular, for each pair $(i,j)\in\left\{1,\ldots, k\right\}\times\left\{1,\ldots, k\right\}$, $1\leq i<j\leq k$ the label $a_{i,j}=2$. Thus, for each pair $(i,j)\in\left\{1,\ldots, k\right\}\times\left\{1,\ldots, k\right\}$, $1\leq i<j\leq k$ there exist vertices $v\in V_i$ and $w\in V_j$ such that $m(\left\{v,w\right\})=2$.

Conversely, assume that the conditions (i) and (ii) hold for $D_\Gamma$.
Then the associated even Dyer group $D_\Omega$ is abelian. 

For $i\in\left\{1,\ldots, k\right\}$ the subgroup $V_{\Delta_i}$ is cyclic and is therefore quasi-perfect or is a Coxeter group and is quasi-perfect by \cite[Proposition 5.1]{BrooksbankPiggott2012}. For $x_i\in V_i$ we know that $x_i$ and $w_i$ are conjugate, therefore
$$x_iD^{(1)}_{V_i}=w_iD^{(1)}_{V_i}.$$
Since $D^{(1)}_{V_i}=D^{(2)}_{V_i}$ we have
$x_iD^{(2)}_{V_i}=w_iD^{(2)}_{V_i}$.

Let $G$ be a group. For every subgroup $H\subseteq G$ we have $H^{(i)}\subseteq G^{(i)}$ for all $i\geq 0$, see \cite[Theorem 5.15]{Rotman1995}.
Thus $D^{(2)}_{V_i}\subseteq D^{(2)}_\Gamma$ and therefore 
$$x_iD^{(2)}_\Gamma=w_iD^{(2)}_\Gamma.$$

Our goal is to show that $D_\Gamma/D^{(2)}_\Gamma$ is abelian.
Let $gD^{(2)}_\Gamma$ be in $D_\Gamma/D^{(2)}_\Gamma$. For $i\in\left\{1,\ldots, k\right\}$ and $v\in V_i$ we may replace each occurrence of $v$ in $gD^{(2)}_\Gamma$ by $w_iD^{(2)}_\Gamma$. This observation shows that
$$gD^{(2)}_\Gamma\in\langle w_1D^{(2)}_\Gamma,\ldots, w_kD^{(2)}_\Gamma\rangle.$$

The group $\langle w_1D^{(2)}_\Gamma,\ldots, w_kD^{(2)}_\Gamma\rangle$ is abelian. More precisely: let $i,j\in\left\{1,\ldots, k\right\}$, $i\neq j$. By assumption (ii) there exist vertices $v\in V_i$ and $w\in V_j$ such that $vw=wv$. Hence
$$w_iw_jD^{(2)}_\Gamma=vwD^{(2)}_\Gamma=wvD^{(2)}_\Gamma=w_jw_iD^{(2)}_\Gamma.$$

In particular, this shows that $D_\Gamma/D^{(2)}_\Gamma$ is abelian and therefore we obtain $D^{(1)}_\Gamma=D^{(2)}_\Gamma$.

\end{proof}

A group $G$ is called \emph{virtually free} if it contains a free subgroup of finite index. 
A  graph is called \emph{chordal} if every induced circle of length $\geq 4$ has a chord, that means induced circles have length at most $3$.
Using Bass-Serre theory it was proven in \cite{Varghese2026} that virtual freeness of Dyer groups can also be characterized using Dyer graphs.

\begin{thm}(\cite{Varghese2026})
\label{virtuallyfree}
Let $(\Gamma, m, f)$ be a Dyer graph and $D_\Gamma$ be the associated Dyer group. Then $D_\Gamma$ is virtually free if and only if:
\begin{enumerate}
\item if $f(v)=f(w)=\infty$ and $v\neq w$, then $\left\{v,w\right\}\notin E(\Gamma)$,
\item if $f(u)=\infty$, $f(v), f(w)\in\mathbb{N}_{\geq 2}$ such that $v\neq w$, and $\left\{u,v\right\}, \left\{u, w\right\}\in E(\Gamma)$, then $\left\{v,w\right\}\in E(\Gamma)$, 
\item $\Gamma$ is chordal, and
\item if $\Omega\subseteq \Gamma$ is a complete subgraph such that $D_\Omega$ is a Coxeter group, then $D_\Omega$ is finite.
\end{enumerate}
\end{thm}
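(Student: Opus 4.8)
The plan is to use Bass--Serre theory together with the structure theorem of Karrass--Pietrowski--Solitar: a finitely generated group is virtually free if and only if it is the fundamental group of a finite graph of finite groups. Two consequences are used repeatedly. First, virtual freeness passes to subgroups, and by \cite{Dyer1990} every induced subgraph $\Delta\subseteq\Gamma$ gives a subgroup $D_\Delta\leq D_\Gamma$; so to \emph{violate} virtual freeness it suffices to exhibit one induced subgraph whose Dyer group is not virtually free. Second, the class of virtually free groups is closed under amalgamated products $A*_C B$ and HNN extensions with \emph{finite} edge group $C$: the Bass--Serre tree then has finite edge stabilizers, and one reassembles a finite graph of finite groups. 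I would prove necessity of (1)--(4) by producing forbidden subgroups, and sufficiency by an induction on $|V(\Gamma)|$ that peels off one vertex at a time along such splittings.

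For necessity, conditions (1), (2) and (4) are handled by direct subgroup arguments. If $f(v)=f(w)=\infty$ and $\left\{v,w\right\}\in E(\Gamma)$, the Dyer condition forces the edge label to be $2$, so $\langle v,w\rangle\cong\ZZ^2$, which is not virtually free; this gives (1). If $f(u)=\infty$, $\left\{u,v\right\},\left\{u,w\right\}\in E(\Gamma)$ with $v\neq w$ and $\left\{v,w\right\}\notin E(\Gamma)$, then both edges at $u$ have label $2$, so $D_{\left\{u,v,w\right\}}\cong\langle u\rangle\times(\langle v\rangle*\langle w\rangle)\cong\ZZ\times(\text{infinite group})$, which contains $\ZZ^2$; this gives (2). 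If $\Omega\subseteq\Gamma$ is complete with $D_\Omega$ an infinite Coxeter group, then $D_\Omega$ is one-ended (an infinite Coxeter group with complete diagram does not split over a finite subgroup), hence not virtually free; this forces (4).

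The main obstacle is the necessity of chordality (3). Here I would pass to the subgroup $D_C$ associated to an induced cycle $C$ of length $n\geq 4$ and show it is not virtually free. Condition (2), already shown necessary, guarantees that no vertex of $C$ carries the label $\infty$: on an induced cycle the two neighbours of any vertex are non-adjacent, so an $\infty$-vertex would produce exactly the configuration forbidden by (2). Thus $D_C$ is a Dyer group all of whose generators have finite order, and it acts properly and cocompactly on a $2$-dimensional $\CAT(0)$ Davis-type complex whose nerve is the circle $C$. Such a group is never quasi-isometric to a tree: for $n=4$ the complex contains an isometrically embedded flat and $D_C$ contains $\ZZ^2$, while for $n\geq 5$ the group is one-ended of (virtually) surface type. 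Either way $D_C$ is not virtually free, so $\Gamma$ must be chordal. Making this geometric input uniform over all admissible vertex- and edge-labellings is the technical heart of the argument.

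For sufficiency I would argue by induction on $|V(\Gamma)|$, the base case $|V(\Gamma)|=1$ being a cyclic group. If $\Gamma$ has a vertex $v$ with $f(v)=\infty$, then by (2) its neighbourhood $N(v)$ is complete and by (1) contains no $\infty$-vertex, so by (4) the group $D_{N(v)}$ is finite; since the Dyer condition makes every edge at $v$ have label $2$, the vertex $v$ commutes with $D_{N(v)}$ and $D_\Gamma$ is the HNN extension of $D_{\Gamma\setminus\left\{v\right\}}$ with stable letter $v$ and finite associated subgroup $D_{N(v)}$. If instead every vertex label is finite, chordality (3) and Dirac's theorem provide a simplicial vertex $v$, and the visual splitting $D_\Gamma=D_{\st(v)}*_{D_{\lk(v)}}D_{\Gamma\setminus\left\{v\right\}}$ holds; here $\st(v)$ and $\lk(v)$ are complete with all labels finite, so $D_{\st(v)}$ and $D_{\lk(v)}$ are finite by (4). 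In both cases $D_{\Gamma\setminus\left\{v\right\}}$ is virtually free by induction, since conditions (1)--(4) are inherited by induced subgraphs, and the splitting is over a finite edge group; hence $D_\Gamma$ is virtually free by the closure property above. This completes the proof.
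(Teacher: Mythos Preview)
The paper does not contain a proof of this theorem: it is quoted verbatim from \cite{Varghese2026}, with only the remark that ``using Bass--Serre theory it was proven in \cite{Varghese2026}''. So there is no in-paper argument to compare against; your approach via the Karrass--Pietrowski--Solitar characterisation and graph-of-groups splittings is exactly the method the paper attributes to the cited reference.

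That said, a few points in your outline deserve care. In the sufficiency step you invoke (4) to conclude that $D_{N(v)}$ (respectively $D_{\st(v)}$, $D_{\lk(v)}$) is finite, but (4) only speaks about complete subgraphs $\Omega$ for which $D_\Omega$ is a \emph{Coxeter} group. A complete subgraph may contain vertices $w$ with $f(w)\geq 3$; you should first split those off as a finite abelian direct factor (all their incident edges have label $2$ by the Dyer condition) and only then apply (4) to the residual Coxeter part. For the necessity of (4), the assertion that an infinite Coxeter group with all $m_{ij}<\infty$ does not split over a finite subgroup is correct but is itself a nontrivial input (e.g.\ via property~FA for $2$-spherical Coxeter systems, or via the known classification of virtually free Coxeter groups); it should be cited rather than asserted. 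Finally, for (3) your cycle argument must genuinely handle mixed vertex labels (some $f=2$, some $f\geq 3$) on the induced cycle, not just the pure Coxeter case; the Davis-type complex you allude to exists for Dyer groups, but the one-endedness/flat argument needs to be checked in that generality.
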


It is known that the derived length of a non-abelian free group is infinite, but there are many quasi-perfect virtually free Dyer groups. For example, the associated Dyer group $D_\Gamma$, where $\Gamma$ is as shown in \Cref{fig:QuasiPerfectVirtuallyFree}, is quasi-perfect and virtually free.

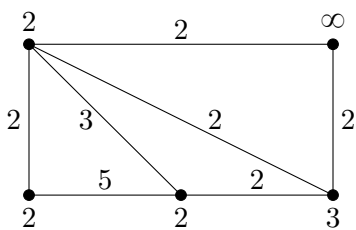
\begin{figure}[htb]
	\begin{center}
	\captionsetup{justification=centering}
		\begin{tikzpicture}
			\draw[fill=black]  (0,0) circle (2pt);
            \node at (0, -0.3) {$2$};
            \draw[fill=black]  (2,0) circle (2pt);
            \node at (2, -0.3) {$2$};
            \draw[fill=black]  (4,0) circle (2pt);
            \node at (4, -0.3) {$3$};
            \draw (0,0)--(2,0);
            \node at (1, 0.2) {$5$};
            \draw (0,0)--(0,2);
            \node at (-0.2, 1) {$2$};

            \draw (0,2)--(4,2);
            \node at (2, 2.2) {$2$};

            \draw (4,0)--(4,2);
            \node at (4.2, 1) {$2$};

            \draw (2,0)--(4,0);
            \node at (3, 0.2) {$2$};

            \draw (0,2)--(2,0);
            \node at (0.75, 1) {$3$};

            \draw (0,2)--(4,0);
            \node at (2.45, 1) {$2$};

           \draw[fill=black]  (0,2) circle (2pt);
            \node at (0, 2.3) {$2$};
           \draw[fill=black]  (4,2) circle (2pt);
           \node at (4, 2.3) {$\infty$};

\end{tikzpicture}
        \caption{A Dyer graph $\Gamma$.}
	      \label{fig:QuasiPerfectVirtuallyFree}
    \end{center}
\end{figure}

\bibliographystyle{halpha}
\bibliography{refs.bib}
\end{document}